\newcommand\xrowht[2][0]{\addstackgap[.5\dimexpr#2\relax]{\vphantom{#1}}}
\newtheorem{theorem}{Theorem}[section]
\newtheorem{proposition}[theorem]{Proposition}
\newtheorem{corollary}[theorem]{Corollary}
\newtheorem{lemma}[theorem]{Lemma}
\theoremstyle{definition}
\newtheorem{definition}[theorem]{Definition}
\newtheorem{remark}[theorem]{Remark}
\newtheorem{example}[theorem]{Example}
\definecolor{cornellred}{rgb}{0.7, 0.11, 0.11}
\newcommand{\B}{\mathfrak{B}}
\newcommand{\N}{\mathbb{N}}
\newcommand{\Z}{\mathbb{Z}}
\newcommand{\R}{\mathbb{R}}
\renewcommand{\P}{\mathbb{P}}
\newcommand{\w}{\omega}
\providecommand{\keywords}[1]{\textit{{Keywords.}} #1}
\providecommand{\MSC}[1]{\textit{{2010 MSC.}} #1}
\def\smallunderbrace#1{\mathop{\vtop{\m@th\ialign{##\crcr
   $\hfil\displaystyle{#1}\hfil$\crcr
   \noalign{\kern3\p@\nointerlineskip}
   \tiny\upbracefill\crcr\noalign{\kern1\p@}}}}\limits}
\titleformat{\paragraph}[runin]
{\normalfont\normalsize\scshape}{\theparagraph}{1em}{}
\titleformat{\subparagraph}[runin]
{\normalfont\normalsize\bfseries}{\thesubparagraph}{1em}{}
\title{\textsc{Normal and pseudonormal numbers}}
\author{\textsc{Nicolò Cangiotti}\footnote{Dipartimento di Matematica, Politecnico di Milano, via Bonardi 9, 20133 Milano, Italy. E-mail Address: \texttt{nicolo.cangiotti@polimi.it}.} \ \& \textsc{Daniele Taufer}\footnote{CISPA Helmholtz Center for Information Security, 66123 Saarbrücken, Germany. E-mail Address: \texttt{daniele.taufer@cispa.de}.}}
\date{}
\begin{document}

\maketitle
\begin{abstract}
\noindent
After a short review of the historical milestones on normal numbers, we introduce the Borel numbers as the reals admitting a probability function on their different bases representations.
In this setting, we provide two probabilistic characterizations of normality based on the stochastic independence of their digits.
Finally, we define the pseudonormality condition, which is satisfied by normal numbers and may be evaluated in a finite number of steps.
\bigskip

\noindent
\keywords{Borel numbers, normal numbers, pseudonormal numbers, base representation.}
\smallskip

\noindent
\MSC{\emph{Primary:} 11K16, 11A63. \emph{Secondary:} 60A05.}
\end{abstract}

\section{Introduction}
A real number is called \emph{normal} if every finite sequence of digits is uniformly distributed in its base-$b$ representation, for all positive integers $b$.
This concept was first introduced in 1909 by Émile Borel \cite{Borel1909}, who proved that almost all real numbers are normal. 
He actually proved that the set of non-normal numbers has Lebesgue measure zero by means of the Borel–Cantelli lemma, but such a non-constructive approach led many mathematicians to better explore the nature of these numbers.

Despite more than a century has passed since the first definition of these numbers, there are still many open questions and conjectures to be addressed, among which verifying the normality condition of renowned candidates and constructing concrete examples of normal numbers seem to be, quite surprisingly, the most compelling.
In 1916, Wac\l{}aw Sierpiński constructed a famous example of a number that can be proved to be normal \cite{Sierp1917}.
In the following years, other normal numbers were introduced \cite{Champernowne33,Nakai1992} and many of their properties were discovered \cite{Schmidt1960}.
One of the most interesting and useful tools to deal with these objects is the so-called \emph{Weyl criterion} \cite{Weyl1916}, which has led to several results (see, e.g., \cite{Wall49}) and seems promising also for future research.
However, we are still far from a comprehensive knowledge of these objects.

The present work aims at providing a different approach to the study of normal numbers by using alternative techniques. 
First, we rigorously formalize the probabilistic concept underlying the definition of normality. Initially, we restrict our attention to the numbers that admit a probability function on all their bases representations, which we call \emph{Borel numbers} as they were originally considered in \cite{Borel1909}.
Among those, normal numbers are then characterized as the real numbers whose base representations constitute strings of mutually independent digits. However, both this probabilistic approach and the classical ones share a common limitation: for a given base $b$ and a given precision Borel number $\w \in \R$, the number of tests that should be performed for verifying the base-$b$ normality of $\w$ is unbounded.
\bigskip

The paper is organized as follows.
In Section \ref{Sec2} we recall the definition of normal number and the most known and relevant results in this field.
In Section \ref{Sec3} we define the Borel numbers and then we propose two probabilistic characterizations of normality.
Section \ref{Sec4} is devoted to the study of the class of pseudonormal numbers.
Finally, in Section \ref{Sec5}, we take the stock of our work, proposing some possible future developments.

\section{Basic definitions and known results}
\label{Sec2}
This section is devoted to recall the classic notation about normal numbers and review some of their key properties.
Throughout this work, we will always denote the set of positive integers by $\N^* = \N \setminus \{0\}$, and by $\N_{>k}$ the set of integers greater than a given value $k$. For a more exhaustive collection of known results and properties about normal numbers, we refer to \cite{Quef2006}.

\subsection{Definitions and examples}
We begin with the basic definitions of normality proposed in the first work by Borel \cite{Borel1909}.

\begin{definition}
A real number $\w$ is said to be \emph{simply normal} in an integer base $b$ if its infinite sequence of digits is \emph{uniformly distributed}, i.e., every base-$b$ digit has natural density $1/b$ in $\w_{b}$.
We let $\mathcal{S}_b$ denote the set of simply normal numbers in the given base $b$.
\end{definition}

\begin{definition}
\label{Normalb}
A real number $\w$ is said to be \emph{normal in base b} (or \emph{b-normal}) if, for every positive integer $n$, all possible $n$-digits strings have density $b^{-n}$ in the base-$b$ representation of $\w$. The set of all $b$-normal numbers will be indicated by $\mathcal{N}_b$.
\end{definition}

\begin{definition}
\label{absnorm}
A real number $\w$ is called an \emph{(absolutely) normal number} if it is normal in base $b$ for every $b \in \N^*$.
\end{definition}

\begin{remark} \label{rmk:limExistance}
We say that an $n$-digits string $L$ has natural density $\epsilon \in \R$ in the $b$-representation of $\w$ if the limit
\begin{equation*}
    \lim_{m \to \infty} \frac{|\{\text{substrings equal to}\ L \ \text{in the first}\ m \ \text{digits of}\ \w \}|}{m}
\end{equation*}
exists, and it is equal to $\epsilon$.
 
We just remark that the existence of such limit is not guaranteed. As an example, the number
\begin{equation*}
    \w = \sum_{i = 0}^\infty \left(\sum_{j = 0}^{2^{2i}-1} b^{-j}\right) b^{-2^{2i}}
\end{equation*}
has the following base-$b$ representation
\begin{equation*}
    w_b = 0,\smallunderbrace{1}_{1}\underbrace{00}_{2}\underbrace{1111}_{4}\underbrace{00000000}_{8}\underbrace{1111111111111111}_{16}\dots .
\end{equation*}
It is easy to verify that the above limit for the null sequence $L=[00 \dots 0]$ of any length does not exist.
\end{remark}

\begin{example}
There are many classical examples of $b$-normal numbers. One of the most famous is the so-called Champernowne constant:
\[
C_{10}=0,12345678910111213141 \dots
\]
which was proved to be a $10$-normal number \cite{Champernowne33}.
Nakai and Shiokawa proved \cite{Nakai1992} that $C_b$ is normal for any base $b$, hence for instance
\begin{align*}
C_{2} &= 0,11011100101110111100 \dots\\
C_{3} &= 0,12101112202122100101 \dots
\end{align*}
are $2$-normal and $3$-normal numbers, respectively.
\end{example}
\begin{remark}
In 1917 Sierpinski performed a labored construction to define an absolutely normal number, however it was a priori unclear whether his number was computable \cite{Sierp1917}.
In fact, in 2002, Becher and Figueira proved that Sierpinski's number is \emph{computably enumerable}, alongside providing a recursive reformulation of its construction, which produces an actual computable absolutely normal number \cite{Becher2002}.
\end{remark}

It is clear that a rational number cannot be normal, as its sequences of digits are eventually periodic in any base.
Moreover, Martin proved in 2001 the existence of an absolutely abnormal irrational number \cite{Martin01}, namely a number that is not simply normal in any base.

\begin{example}
\label{ExAbnormal}
The absolutely abnormal number $\mu$ due to Martin can be computed as follows. Let
\begin{equation*}
   f(n)=\begin{cases}
   4 &\text{if} \ n=2, \\
   n^{\frac{f(n-1)}{n-1}} &\text{if} \ n \ge 3,
    \end{cases}
\end{equation*}
then we define Martin's number as
\begin{equation*}
\mu=\prod_{m=2}^{\infty} \left ( 1- \frac{1}{f(m)} \right),
\end{equation*}
which turns out to be irrational and absolutely abnormal.
\end{example}

\subsection{Classical results}
The importance of normal numbers is witnessed by the following fundamental theorem proved by Borel \cite{Borel1909}.

\begin{theorem} \label{thm:Borel}
Almost all real numbers are normal.
\end{theorem}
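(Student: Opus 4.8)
The plan is to prove Borel's theorem via the Borel--Cantelli lemma, reducing the statement to a countable union of null sets. First I would observe that, since a countable union of Lebesgue-null sets is null, and since there are only countably many bases $b \in \N^*$ and countably many finite strings $L$ over the alphabet $\{0,\dots,b-1\}$, it suffices to show that for each fixed base $b$ and each fixed string $L$ of length $n$, the set of $\w \in [0,1)$ for which $L$ does \emph{not} have density $b^{-n}$ has measure zero. Indeed, the set of non-normal numbers is exactly the countable union, over all $b$ and all $L$, of these bad sets, together with the (measure-zero) set of $b$-adic rationals whose expansion is ambiguous.

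Next I would set up the probabilistic framework on the unit interval: equip $[0,1)$ with Lebesgue measure, and note that the base-$b$ digits $d_1(\w), d_2(\w), \dots$ are independent random variables, each uniformly distributed on $\{0,\dots,b-1\}$. More precisely, I would fix a string $L = (\ell_1,\dots,\ell_n)$ and define, for each $k \ge 1$, the indicator $X_k(\w)$ of the event ``$(d_k,\dots,d_{k+n-1}) = L$''. The key point is that the block indicators $X_1, X_{n+1}, X_{2n+1}, \dots$ along any fixed residue class mod $n$ are independent, each with mean $b^{-n}$, and are bounded; the classical route is then to apply the strong law of large numbers (or, following Borel's original argument, a fourth-moment estimate combined with Borel--Cantelli) to conclude that along each such subsequence the Cesàro averages converge to $b^{-n}$ almost surely. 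Summing the $n$ residue classes and passing from block counts to substring counts in the first $m$ digits (a bookkeeping step whose error is $O(n/m)$) yields that the density of $L$ equals $b^{-n}$ for almost every $\w$.

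The main obstacle, and the part requiring the most care, is making the fourth-moment / Borel--Cantelli estimate rigorous: one must show that $\sum_m \P\!\left( \left| \tfrac{1}{m}\sum_{k=1}^m (X_{(k)} - b^{-n}) \right| > \varepsilon \right) < \infty$ for the relevant independent subsequence, which follows from expanding $\mathbb{E}\big[(\sum_{k=1}^m (X_{(k)} - b^{-n}))^4\big]$ and using independence to kill all cross terms with a lone factor, leaving $O(m^2)$ and hence a summable $m^{-2}$ tail bound via Markov's inequality. Alternatively one can simply invoke Kolmogorov's strong law of large numbers as a black box, which shortens the argument considerably; I would likely do the latter in the main text and relegate the self-contained moment computation to a remark. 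The remaining steps --- countable subadditivity of null sets and the elementary combinatorial reduction from general substring density to the $n$ arithmetic-progression block counts --- are routine and I would state them without belaboring the estimates.
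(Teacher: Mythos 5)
Your proposal is correct and follows essentially the same route the paper points to: the paper does not prove this theorem itself but cites Borel's original argument, which (as the introduction notes) establishes that the non-normal numbers form a Lebesgue-null set via the Borel--Cantelli lemma, exactly the reduction to countably many bases and strings plus an independence/fourth-moment (or strong-law) estimate that you describe. Your residue-class decomposition of overlapping block counts and the handling of ambiguous expansions are the standard bookkeeping steps of that classical proof, so there is nothing to add.
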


We recall that two given integers $r,s \in \N^*$ are called \emph{multiplicatively dependent} if there exist integers $n, m \in \mathbb{Z}$ such that $r^n = s^m$.
In such case, we let $r \sim s$ denote this relation. Multiplicatively dependent bases have been proved to be normal-equivalent by Schmidt in \cite{Schmidt1960}, who obtained the following result.

\begin{theorem}
Let $r,s \in \N^*$.
\begin{itemize}
    \item If $r \sim s$, then any number normal to base $r$ is also normal to base $s$.
    \item If $r \not\sim s$, the set of numbers that are normal to base $r$ but not even simply normal to base $s$ has the power of the continuum.
\end{itemize}
\end{theorem}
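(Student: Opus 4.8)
The plan is to treat the two claims by quite different means. For the first claim, I would begin by observing that $r\sim s$ forces $r$ and $s$ to be powers of a common integer: writing $r^{n}=s^{m}$ (with $n,m$ positive, since $r,s\ge 2$) and comparing prime factorizations forces the exponent vectors of $r$ and of $s$ to be proportional with ratio $m/n$, so $r=t^{a}$ and $s=t^{b}$ for some $t\in\N_{>1}$ and $a,b\in\N^{*}$. It then suffices to prove $\mathcal N_{t^{a}}=\mathcal N_{t}$, which I would do by digit bookkeeping. The inclusion $\mathcal N_{t}\subseteq\mathcal N_{t^{a}}$ is nearly immediate: reading base-$t$ digits $a$ at a time sends a length-$n$ base-$t^{a}$ block to a length-$an$ base-$t$ block, and to transfer densities one only needs that the length-$an$ base-$t$ blocks beginning at positions $\equiv 1\pmod a$ are equidistributed, which follows by summing the equidistribution of \emph{all} length-$an$ blocks over the $a$ residue classes of the starting position. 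For $\mathcal N_{t^{a}}\subseteq\mathcal N_{t}$ I would instead group base-$t$ digits $ak$ at a time with an auxiliary parameter $k\to\infty$: simple normality in base $(t^{a})^{k}$, which is implied by $t^{a}$-normality, controls the frequency of any fixed base-$t$ block up to an $O(1/k)$ error coming from blocks straddling two base-$(t^{a})^{k}$ symbols, and letting $k\to\infty$ pins it to the value prescribed by $t$-normality. (Both facts are classical; see \cite{Quef2006}.) Chaining $\mathcal N_{r}=\mathcal N_{t^{a}}=\mathcal N_{t}=\mathcal N_{t^{b}}=\mathcal N_{s}$ then gives the first bullet.

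For the second claim I would argue through a biased measure. Fix $r\not\sim s$, equivalently $\log r/\log s\notin\mathbb{Q}$, choose a probability vector $\bm p=(p_{0},\dots,p_{s-1})$ with $p_{i}>0$ for all $i$ but $p_{0}\ne 1/s$, and let $\mu$ be the Bernoulli measure on $[0,1]$ under which the base-$s$ digits are independent with common law $\bm p$; this $\mu$ is non-atomic and has full support. By the strong law of large numbers the digit $0$ has frequency $p_{0}\ne 1/s$ in $\w_{s}$ for $\mu$-almost every $\w$, so $\mu(\mathcal S_{s})=0$. The crux is then the complementary fact that $\mu(\mathcal N_{r})=1$, i.e.\ a number whose base-$s$ digits are deliberately biased still has perfectly equidistributed base-$r$ digits. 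Granting this, $\mathcal N_{r}\setminus\mathcal S_{s}$ is a Borel set of full $\mu$-measure; a non-atomic measure cannot give full measure to a countable set, and an uncountable Borel subset of $\R$ has the cardinality of the continuum, which is the assertion of the second bullet (and a fortiori the set is non-empty, so normality is genuinely base-dependent).

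To establish $\mu(\mathcal N_{r})=1$ I would run the metric argument behind Theorem \ref{thm:Borel} with $\mu$ in place of Lebesgue measure. By the Weyl criterion \cite{Weyl1916,Wall49}, $\w\in\mathcal N_{r}$ if and only if $\frac{1}{N}\sum_{n<N}e^{2\pi i h r^{n}\w}\to 0$ for every $h\in\Z\setminus\{0\}$, and one would estimate the $\mu$-average $\mathbb E_{\mu}\big|\frac{1}{N}\sum_{n<N}e^{2\pi i h r^{n}\w}\big|^{2}=\frac{1}{N^{2}}\sum_{n,n'<N}\widehat\mu\!\big(h(r^{n}-r^{n'})\big)$ and then upgrade mean-square to almost-sure convergence by a Davenport--Erd\H{o}s--LeVeque type Borel--Cantelli argument along a sparse sequence of $N$. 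The obstacle --- and the real content of the theorem --- is that, unlike Lebesgue measure, a biased Bernoulli measure is not Rajchman: its Fourier coefficients do not vanish at infinity (indeed $\widehat\mu(s^{j}k)=\widehat\mu(k)$ for all $j\ge 0$, so e.g.\ $\widehat\mu(s^{j})=\widehat\mu(1)\ne 0$), so one cannot bound the off-diagonal terms individually. One must instead show that the \emph{resonant} frequencies --- those $k$ for which $|\widehat\mu(hk)|$ is not small --- are hit by $r^{n}-r^{n'}$ for only a vanishing proportion of pairs $(n,n')$, which forces the mean-square to $0$ regardless; this resonance-rarity is exactly where $\log r/\log s\notin\mathbb{Q}$ enters, as $r^{n}-r^{n'}$ being highly divisible by $s$ would amount to an anomalously small multiplicative relation between $r$ and the powers of $s$. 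Making that count uniform in $n,n'$ is the main difficulty.

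Finally I would note the alternative of W.\ Schmidt's original constructive route \cite{Schmidt1960}: build a Cantor set of admissible $\w$ by interleaving, in the base-$r$ expansion, long ``generic'' stretches (taken from an absolutely normal number), which carry base-$r$ normality, with shorter stretches placing $\w$ near an $s$-adic rational $p/s^{a}$ selected so that simultaneously the freshly determined base-$r$ digits are $\varepsilon$-balanced and the base-$s$ digits of $p$ are biased --- a simultaneous choice that is possible precisely because $r\not\sim s$. Alternating ``digit $0$ abundant'' and ``digit $0$ scarce'' epochs makes the base-$s$ frequency of $0$ oscillate, destroying simple normality in base $s$, while the freedom at each step produces continuum many such $\w$. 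Either way the heart of the matter is the same: to quantify how weakly the base-$s$ structure can constrain the base-$r$ digits when the two bases are multiplicatively independent.
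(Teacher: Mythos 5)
The paper itself offers no proof of this statement: it is quoted as Schmidt's theorem with a pointer to \cite{Schmidt1960}, so the only fair comparison is with the classical arguments in the cited literature. Measured against those, your outline follows the standard route: for $r\sim s$ you reduce to $r=t^{a}$, $s=t^{b}$ and invoke the Maxfield-type equivalence $\mathcal{N}_{t}=\mathcal{N}_{t^{a}}$; for $r\not\sim s$ you put a biased Bernoulli measure $\mu$ on the base-$s$ digits, kill simple $s$-normality by the law of large numbers, and claim $\mu(\mathcal{N}_{r})=1$ via Weyl sums, which is indeed the spirit of Schmidt's own measure-theoretic argument (and of Cassels before him, and of Feldman--Smorodinsky and Hochman--Shmerkin in the generality of arbitrary nondegenerate Bernoulli bias). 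The concluding cardinality step (a non-atomic measure giving full measure to a Borel set forces that set to be uncountable, hence of continuum cardinality) is correct.

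However, judged as a proof rather than a roadmap, the proposal has gaps at exactly the two load-bearing points. In the first bullet, the assertion that the density of a length-$an$ base-$t$ block along starting positions $\equiv 1\pmod a$ ``follows by summing the equidistribution of all length-$an$ blocks over the $a$ residue classes'' is not an argument: knowing the total density is $t^{-an}$ does not force each residue class to carry its fair share $\tfrac{1}{a}\,t^{-an}$, and establishing this (e.g.\ by counting occurrences inside longer super-blocks, or through the uniform-distribution characterization of Theorems \ref{Thm-unif1}--\ref{Thm-unif2}) is the actual content of $\mathcal{N}_{t}\subseteq\mathcal{N}_{t^{a}}$. In the second bullet everything rests on $\mu(\mathcal{N}_{r})=1$, and you correctly identify why this is not a routine Borel--Cantelli upgrade: a biased Bernoulli measure in base $s$ is not Rajchman, so one must show the resonant frequencies are hit by $h(r^{n}-r^{n'})$ for only a vanishing proportion of pairs, with uniformity sufficient to drive the mean square to zero --- and you explicitly leave that count open (``the main difficulty''). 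That count is precisely the hard core of Schmidt's theorem; without it, the central lemma is assumed rather than proved, and the alternative constructive route you mention is likewise only gestured at. So the plan is sound and faithful to the cited source's method, but the two decisive estimates are stated, not established.
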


It seems now appropriate to point out the connection between normal numbers and uniformly distributed modulo $1$ sequences (see, e.g, {\cite[Thm. 8.15]{Niven1956}}), which has been constituting one of the most practical results in this field.

\begin{theorem}
\label{Thm-unif1}
Let $b \in \N^*$. A real number $\w$ is $b$-normal if and only if the sequence $\{b^n\w\}_{n \in \N}$ $=$ $\{\w, b \w, b^2\w, \dots\}$ is uniformly distributed modulo $1$.
\end{theorem}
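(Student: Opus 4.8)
The plan is to use the elementary fact that multiplication by $b$ acts as a left shift on base-$b$ expansions, which turns the combinatorial condition defining $b$-normality into a statement about the $b$-adic subintervals of $[0,1)$; the only remaining work is then to pass from $b$-adic intervals to arbitrary subintervals by approximation.

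First I would reduce to $\w \in [0,1)$, using $\{b^n\w\} = \{b^n\{\w\}\}$ and the fact that $b$-normality depends only on the fractional digits of $\w$. The $b$-adic rationals can be treated separately, since for them both sides of the equivalence fail: their digit string is eventually periodic, so they are not $b$-normal, while $\{b^n\w\}$ is eventually $0$, hence the sequence is not uniformly distributed modulo $1$. For the remaining $\w = 0.d_1d_2d_3\dots$, which has a unique base-$b$ expansion, the key identity is $\{b^n\w\} = 0.d_{n+1}d_{n+2}\dots$. Writing, for a digit string $L = [l_1\dots l_k]$, $I_L = [\,0.l_1\dots l_k,\ 0.l_1\dots l_k + b^{-k})$ for the $b$-adic interval of length $b^{-k}$ it determines, one gets $\{b^n\w\} \in I_L$ if and only if $d_{n+1}\dots d_{n+k} = L$. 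Therefore the number of $n < N$ with $\{b^n\w\} \in I_L$ and the number of occurrences of $L$ among the first $N$ digits of $\w$ differ by at most $k$, which is immaterial after dividing by $N$ and letting $N \to \infty$. Hence $\w$ is $b$-normal precisely when, for every $b$-adic interval $I \subseteq [0,1)$, the density of indices $n$ with $\{b^n\w\} \in I$ exists and equals $|I|$.

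It then remains to show that this last property is equivalent to $\{b^n\w\}_n$ being uniformly distributed modulo $1$; one direction is trivial, and for the other it suffices to check arbitrary subintervals knowing the $b$-adic ones. Given $[a,c) \subseteq [0,1)$ and $\varepsilon > 0$, I would pick $k$ with $b^{-k} < \varepsilon$ and partition $[0,1)$ into the $b^k$ closed-open $b$-adic intervals of length $b^{-k}$; taking $U^-$ to be the union of those contained in $[a,c)$ and $U^+$ the union of those meeting $[a,c)$, one has $U^- \subseteq [a,c) \subseteq U^+$ and $|U^+| - |U^-| \le 2b^{-k} < 2\varepsilon$. By disjointness and additivity of the counting function, together with the already established $b$-adic case, $\tfrac1N|\{n < N : \{b^n\w\} \in U^{\pm}\}| \to |U^{\pm}|$, which sandwiches both $\liminf$ and $\limsup$ of $\tfrac1N|\{n<N : \{b^n\w\} \in [a,c)\}|$ between $|U^-|$ and $|U^+|$, hence within $2\varepsilon$ of $c-a$. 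Letting $\varepsilon \to 0$ gives convergence to $c-a$, which is the definition of uniform distribution modulo $1$.

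The only slightly delicate points are the bookkeeping of the $O(k)$ boundary discrepancy in the counting identity and checking that for a general interval the limit, and not merely the upper and lower densities, exists; I do not expect a real obstacle beyond carefully handling the non-uniqueness of base-$b$ expansions at $b$-adic rationals, since the substantive step — reducing ``for all subintervals'' to ``for all $b$-adic subintervals'' — is exactly what the shift identity makes transparent.
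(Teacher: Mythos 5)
Your argument is correct. Note that the paper does not prove this statement at all: it is quoted as a classical result with a citation to Niven, so there is no internal proof to compare against, and what you give is precisely the standard argument from that literature (the shift identity $\{b^n\w\}=0.d_{n+1}d_{n+2}\dots$ turning digit-block counts into visits to $b$-adic intervals, followed by sandwiching an arbitrary interval between unions of level-$k$ $b$-adic intervals). The two delicate points you flag are handled adequately: $b$-adic rationals are correctly set aside since both normality and uniform distribution fail for them, the $O(k)$ boundary discrepancy disappears after dividing by $N$, and the $\liminf$/$\limsup$ sandwich does establish existence of the limit for a general interval, so there is no gap.
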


One of the most powerful tools to exploit this result is the following \emph{criterion} proved by H. Weyl \cite{Weyl1916}.

\begin{theorem}
\label{Thm-unif2}
Let $\{a_n\}_{n \in \mathbb{N}}$ be a sequence of real numbers.
The following are equivalent.
\begin{itemize}
    \item $\{a_n\}_{n \in \mathbb{N}}$ is uniformly distributed modulo $1$.
    \item For each non-zero integer $k$, we have
    \begin{equation*}
        \lim_{N \to \infty} \frac{1}{N} \sum_{j = 1}^N e^{2 \pi i k a_j} = 0.
    \end{equation*}
   
\end{itemize}
\end{theorem}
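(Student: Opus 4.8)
The plan is to compare, for a suitable class of complex-valued $1$-periodic test functions $f$, the Cesàro averages $\frac1N\sum_{j=1}^N f(a_j)$ with the integral $\int_0^1 f(x)\,dx$, and then to enlarge or shrink this class by approximation. The starting point is the reformulation of uniform distribution: writing $\mathbf 1_{[c,d)}$ for the $1$-periodic extension of the indicator of a subinterval $[c,d)\subseteq[0,1)$, the sequence $\{a_n\}_{n\in\N}$ is uniformly distributed modulo $1$ if and only if
\[
\lim_{N\to\infty}\frac1N\sum_{j=1}^N \mathbf 1_{[c,d)}(a_j)=d-c\qquad\text{for every }[c,d)\subseteq[0,1).
\]
Thus both implications amount to transferring a convergence statement of the form $\frac1N\sum_{j=1}^N f(a_j)\to\int_0^1 f$ between the exponentials $x\mapsto e^{2\pi i k x}$ and the interval indicators $\mathbf 1_{[c,d)}$.

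For the implication from uniform distribution to the exponential sums, I would first observe that the defining property, since the indicators of intervals span the step functions, extends by linearity to all step functions, and then, by squeezing an arbitrary Riemann-integrable $f$ on $[0,1]$ between step functions $g\le f\le h$ with $\int_0^1(h-g)$ small, to every Riemann-integrable $f$; hence $\frac1N\sum_{j=1}^N f(a_j)\to\int_0^1 f(x)\,dx$ for all such $f$. Applying this to the real and imaginary parts of the continuous function $x\mapsto e^{2\pi i k x}$ and using $\int_0^1 e^{2\pi i k x}\,dx=0$ for $k\neq0$ yields the claimed limit.

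Conversely, assume $\frac1N\sum_{j=1}^N e^{2\pi i k a_j}\to0$ for every nonzero $k\in\Z$. By linearity this gives $\frac1N\sum_{j=1}^N P(a_j)\to\int_0^1 P$ for every trigonometric polynomial $P$. Given a $1$-periodic continuous $f$ and $\varepsilon>0$, the Weierstrass (Fejér) approximation theorem supplies a trigonometric polynomial $P$ with $\sup_x|f(x)-P(x)|<\varepsilon$, whence
\[
\Bigl|\frac1N\sum_{j=1}^N f(a_j)-\int_0^1 f\Bigr|\le 2\varepsilon+\Bigl|\frac1N\sum_{j=1}^N P(a_j)-\int_0^1 P\Bigr|,
\]
so letting $N\to\infty$ and then $\varepsilon\to0$ shows $\frac1N\sum_{j=1}^N f(a_j)\to\int_0^1 f$ for all $1$-periodic continuous $f$. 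Finally, for a fixed interval $[c,d)$ and $\varepsilon>0$, I would sandwich $\mathbf 1_{[c,d)}$ between two $1$-periodic continuous functions $f_-\le\mathbf 1_{[c,d)}\le f_+$ (trapezoidal bumps) with $\int_0^1(f_+-f_-)<\varepsilon$; the convergence just established forces both $\liminf_N$ and $\limsup_N$ of $\frac1N\sum_{j=1}^N\mathbf 1_{[c,d)}(a_j)$ to lie within $\varepsilon$ of $d-c$, and since $\varepsilon$ is arbitrary the limit is exactly $d-c$, which is uniform distribution modulo $1$.

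The only step needing genuine input beyond elementary $\limsup/\liminf$ estimates is the density step in the converse direction: it is the Weierstrass/Fejér theorem that licenses the passage from pure exponentials — all that the hypothesis directly controls — to arbitrary continuous test functions, after which the squeeze down to interval indicators is routine. In the forward direction the analogous role is played by the approximation of Riemann-integrable functions by step functions, which is more classical still, so I expect the trigonometric approximation to be the main (and essentially only) obstacle.
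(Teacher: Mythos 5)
Your proof is correct: it is the standard argument for Weyl's criterion, passing from interval indicators to step functions to Riemann-integrable functions (hence the exponentials) in one direction, and from exponentials to trigonometric polynomials (via linearity, with the constant term handled trivially), then to continuous periodic functions by Fej\'er/Weierstrass approximation, and finally down to interval indicators by a continuous sandwich in the other. Note that the paper does not prove this statement at all --- it quotes it as a classical result of Weyl with a citation --- so there is no internal proof to compare against; your write-up is a complete and accurate reconstruction of the classical proof.
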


\begin{corollary}
The sequence $\{n\w\}_{n\in \N^*}=\{\w, 2\w, 3\w, \dots\}$ is equidistributed modulo $1$ if and only if $\w$ is irrational.
\end{corollary}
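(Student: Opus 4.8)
The plan is to deduce both implications directly from Weyl's criterion (Theorem~\ref{Thm-unif2}), applied to the sequence $a_n = n\w$; here ``equidistributed'' is synonymous with ``uniformly distributed''.

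For the ``if'' direction, suppose $\w$ is irrational and fix a non-zero integer $k$. Then $k\w \notin \Z$, so $z := e^{2\pi i k \w} \neq 1$, and the relevant exponential sum is a finite geometric series,
\[
\sum_{j=1}^N e^{2\pi i k j \w} = \sum_{j=1}^N z^j = z\,\frac{z^N - 1}{z - 1},
\]
whose modulus is bounded by $2/|z-1|$, a constant independent of $N$. Dividing by $N$ and letting $N \to \infty$ yields $\lim_{N\to\infty}\frac1N\sum_{j=1}^N e^{2\pi i k j \w} = 0$, so by Theorem~\ref{Thm-unif2} the sequence $\{n\w\}_{n \in \N^*}$ is uniformly distributed modulo $1$.

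For the ``only if'' direction I argue by contraposition: if $\w = p/q$ is rational with $q \in \N^*$, then $n\w \bmod 1$ takes at most $q$ distinct values, all of the form $r/q$ with $0 \le r < q$. Taking the non-zero integer $k = q$ gives $e^{2\pi i k n \w} = e^{2\pi i n p} = 1$ for every $n$, so the Weyl average equals $1$, not $0$; hence by Theorem~\ref{Thm-unif2} the sequence is not uniformly distributed modulo $1$. (Equivalently, any subinterval of $[0,1)$ disjoint from the finite set $\{0, 1/q, \dots, (q-1)/q\}$ contains no term of the sequence, which already contradicts equidistribution.)

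There is essentially no hard step: the whole argument is a direct invocation of Weyl's criterion, and the only mild care needed is, in the rational case, to exhibit a single frequency $k$ for which the exponential average fails to vanish — which is immediate by taking $k$ to be the denominator of $\w$.
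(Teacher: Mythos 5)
Your proof is correct and follows exactly the route the paper intends: the corollary is stated as an immediate consequence of Weyl's criterion (Theorem~\ref{Thm-unif2}), and your geometric-series bound for irrational $\w$ together with the choice $k=q$ for rational $\w=p/q$ is the standard argument the paper leaves implicit. Nothing is missing.
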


\begin{remark}
In literature, there are many results concerning the equidistribution modulo $1$, such as \cite{Bern1911,Niven1956,Ruzsa1982} .
\end{remark}
    
\section{Probabilistic characterization of normality}
\label{Sec3}

First and foremost, we fix the notation that will be employed in the following.
For any $b \in \N^*$, we denote the base-$b$ digits by $\B_b = \{0, \dots, b-1\}$, or simply by $\B$ when the base is clear from the context.
Given a real number $\w \in \R$, we denote the string of digits in its $b$-expansion by $\w_b$, indexed by the inverse of correspondent exponent of $b$ in its $b$-representation, namely
\begin{equation*}
    \left( \sum_{i = k}^\infty s_i b^{-i} \right)_b = [s_k, s_{k+1}, \dots].
\end{equation*}

We are primarily interested in numbers whose $b$-expansions admit the limits discussed in Remark \ref{rmk:limExistance}.

\begin{definition}
We say that $\w \in \R$ is a \emph{Borel number} if for every $b,n \in \N^*$ and every string $[s_1,\dots,s_n] \in \B_b^n$, the following limit
\begin{equation*}
    \lim_{m \to \infty} \frac{|\{[b_k,\dots,b_{k+n-1}] \subset \w_b \ : \ [b_k,\dots,b_{k+n-1}] = [s_1,\dots,s_n] \}_{ 1 \leq k \leq m-n+1}|}{m}
\end{equation*}
exists. When existent, we denote it by $\P_{\w,b}([s_1,\dots, s_n])$, or in a more concise form by $\P([s_1,$ $\dots, s_n])$, when the number and the base are clear from the context. We denote the set of all Borel numbers by $\mathcal{B}$.
\end{definition}

Dealing with Borel numbers, we will simply let $[s_1,\dots, s_n]$ denote the event that a generic $n$-substring of digits in the $b$-representation of $\w$ is equal to $[s_1, \dots, s_n]$.
In the same spirit, we denote the probability of finding the digit $s_n$ right after an occurrence of the string $[s_1, \dots, s_{n-1}]$ in the $b$-representation of $\w$ by $\P([s_n] \ | \ [s_1,\dots, s_{n-1}])$.
Finally, we let $\P([s_n] \ | \ [s_1])$ denote the probability of finding in the $b$-representation of $\w$ a string of length $n$ starting with $s_1$ and ending with $s_n$.

\begin{remark}
Since every normal number is a Borel number by definition, Theorem \ref{thm:Borel} ensures that almost all real numbers are Borel.
\end{remark}

The idea of normality is closely connected to the stochastic independence of the digits and this section shall formalize this link. 

\begin{definition}
\label{caucon}
Let $b \in \N^*$.
A Borel number $\w \in \R$ is said $b$-\emph{casually constructed} if for every $n \in \N^*$ and every $[s_1, \dots, s_n]$ $\in \B^n$ the following conditions hold.
\begin{equation}
\label{Hyp0}
\begin{cases}
    \P([s_1]) = \frac{1}{b}, \\
    \P([s_{n}] \ | \ [s_1,\dots, s_{n-1}]) = \frac{1}{b}.
    \end{cases}
\end{equation}

If the condition \eqref{Hyp0} holds for every base $b \in \N^*$, we shall say that $\w$ is \emph{casually constructed}.
\end{definition}

\begin{remark}
Definition \ref{caucon} means that any digit is independent of the preceding.
In fact, two events $A$ and $B$ are stochastically independent if
\begin{equation*}
    \P(A|B)=\P(A).
\end{equation*}
\end{remark}

The equivalence between $b$-casually constructed numbers and $b$-normal numbers is given by the following proposition.

\begin{proposition}
Let $b \in \N^*$. A Borel number $\w \in \R$ is $b$-normal if and only if it is $b$-casually constructed.
\end{proposition}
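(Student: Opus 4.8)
The plan is to reduce both implications to a single ``chain-rule'' identity expressing the density of a length-$n$ string in terms of the density of its length-$(n-1)$ prefix and the corresponding conditional density. Write $A_m(L)$ for the number of occurrences of a string $L$ among the first $m$ digits of $\w_b$. The key combinatorial observation is that an occurrence of $[s_1,\dots,s_{n-1}]$ starting at a position $k \le m-n+1$ is followed by the digit $s_n$ precisely when position $k$ starts an occurrence of $[s_1,\dots,s_n]$; consequently $A_m([s_1,\dots,s_n])$ and the number of occurrences of $[s_1,\dots,s_{n-1}]$ that are followed by $s_n$ among the first $m$ digits differ only by at most $n$ boundary terms. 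Dividing the ratio $A_m([s_1,\dots,s_n])/A_m([s_1,\dots,s_{n-1}])$ by $m$ top and bottom and using that $A_m([s_1,\dots,s_{n-1}])/m \to \P([s_1,\dots,s_{n-1}])$, I would deduce that whenever $\P([s_1,\dots,s_{n-1}]) > 0$ the conditional density exists and
\[
\P([s_n] \mid [s_1,\dots,s_{n-1}]) = \frac{\P([s_1,\dots,s_n])}{\P([s_1,\dots,s_{n-1}])}, \qquad\text{i.e.}\qquad \P([s_1,\dots,s_n]) = \P([s_1,\dots,s_{n-1}])\cdot \P([s_n] \mid [s_1,\dots,s_{n-1}]).
\]
(If instead the prefix has density $0$, then so does the longer string, since its set of starting positions is contained in that of the prefix; this degenerate situation does not occur in either direction below.)

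Granting this identity, the ($\Rightarrow$) direction is essentially immediate. If $\w \in \mathcal{N}_b$ then $\P([s_1,\dots,s_k]) = b^{-k}$ for every $k \in \N^*$ and every string; in particular $\P([s_1]) = 1/b$ and every prefix has strictly positive density, so the displayed identity gives $\P([s_n] \mid [s_1,\dots,s_{n-1}]) = b^{-n}/b^{-(n-1)} = 1/b$. Thus \eqref{Hyp0} holds and $\w$ is $b$-casually constructed.

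For ($\Leftarrow$), suppose $\w$ is $b$-casually constructed and argue by induction on $n$ that $\P([s_1,\dots,s_n]) = b^{-n}$ for every $[s_1,\dots,s_n] \in \B^n$. The case $n=1$ is the first line of \eqref{Hyp0}. For the inductive step, the inductive hypothesis gives $\P([s_1,\dots,s_{n-1}]) = b^{-(n-1)} > 0$, so the chain-rule identity applies, and the second line of \eqref{Hyp0} yields $\P([s_1,\dots,s_n]) = b^{-(n-1)}\cdot \tfrac1b = b^{-n}$. Since every string has density $b^{-n}$, the number $\w$ is $b$-normal, which completes the proof.

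The only genuinely delicate point — hence the main obstacle — is the bookkeeping in the first step: one must verify that replacing ``first $m$ digits'' by ``starting positions $1,\dots,m-n+1$'' (and performing the analogous shift needed to align the count of occurrences of $[s_1,\dots,s_n]$ with the count of prefixes-followed-by-$s_n$) alters each count by only $O(1)$, so that after division by $m$ and passage to the limit all boundary effects vanish. Once this is checked, everything reduces to elementary limit manipulations, using only the existence of the string densities guaranteed by $\w \in \mathcal{B}$ together with their positivity in the cases that actually arise.
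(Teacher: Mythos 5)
Your argument is correct and follows essentially the same route as the paper: both directions rest on the identity $\P([s_1,\dots,s_n]) = \P([s_1,\dots,s_{n-1}])\cdot\P([s_n]\mid[s_1,\dots,s_{n-1}])$, used by division for the forward implication and by induction on $n$ for the converse. The only difference is that you justify this chain rule at the level of occurrence counts (with the $O(1)$ boundary-term bookkeeping), whereas the paper simply invokes the definition of conditional probability and leaves that verification implicit.
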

\begin{proof}
Let us first assume that $\w$ is $b$-normal.
Since $\w$ is simply $b$-normal, then for every $s_1 \in \B$ we have   $\P([s_1]) = \frac{1}{b}$.
Furthermore, by definitions of conditional probability and normality, we have
\begin{gather*}
\scalebox{1}{$\displaystyle{\P([s_{n}] \ | \ [s_1,\dots, s_{n-1}]) = \frac{\P([s_{n}]\wedge [s_1,\dots, s_{n-1}])}{\P( [s_1,\dots, s_{n-1}])} = \frac{\P([s_1,\dots, s_{n}])}{\P([s_1,\dots, s_{n-1}])}= \frac{b^{-n}}{b^{-(n-1)}}=\frac{1}{b}}$},
\end{gather*}
hence both the conditions \eqref{Hyp0} are satisfied.
\smallskip

Let us now suppose that $\w$ is $b$-casually constructed.
We inductively prove that $\w$ is $b$-normal, i.e., we show by induction on $n \in\N^*$ that any string of size $n$ has the same probability $b^{-n}$ to occur.
The base step is the first condition of \eqref{Hyp0}.
As for the inductive step, we have
\begin{align*}
\P([s_1,\dots, s_{n+1}]) = \P([s_1,\dots, s_{n}])\cdot \P([s_{n+1}] \ | \ [s_1 ,\dots, s_n]) = \frac{1}{b^n} \cdot \frac{1}{b} =\frac{1}{b^{n+1}}.
\end{align*}
Thus, $\w$ is $b$-normal.
\end{proof}

The next result provides a further characterization from a probabilistic point of view, which turns out to be very useful in the following. 

\begin{theorem}
\label{Thm:Indep}
Let $b \in \N^*$ and let $\w$ be a Borel number. Then, $\w$ is $b$-normal if and only if for every $n \in \N^*$ and every pair $s_1, s_n \in \B$, we have
\begin{equation}
\label{Hyp1b}
    \P([s_{n}] \ | \ [s_1]) = \frac{1}{b}.
\end{equation}
\end{theorem}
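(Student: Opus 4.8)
The plan is to prove the two directions of the equivalence separately; the forward one I expect to be immediate, while the reverse one, which is the substantial part, I would run by induction on the length of the digit string.

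For the implication ``$b$-normal $\Rightarrow$ \eqref{Hyp1b}'', I would use that $b$-normality forces every length-$n$ string to have density $b^{-n}$, so in particular $\P([s_1]) = b^{-1}$. The point is that the event that a generic length-$n$ window begins with $s_1$ and ends with $s_n$ is the \emph{disjoint} union, over all fillings $[t_2,\dots,t_{n-1}] \in \B^{n-2}$ of the intermediate positions, of the events $[s_1,t_2,\dots,t_{n-1},s_n]$; since there are only finitely many fillings, the associated densities add, giving $b^{n-2}\cdot b^{-n} = b^{-2}$ for the numerator of $\P([s_n]\mid[s_1])$. Dividing by $\P([s_1]) = b^{-1}$ gives \eqref{Hyp1b}.

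For the converse, assume \eqref{Hyp1b}. First I would deduce simple normality: for $n = 2$ the hypothesis reads $\P([s_1 s_2]) = \tfrac1b\P([s_1])$; summing over $s_1$ and using $\sum_{s_1}\P([s_1 s_2]) = \P([s_2])$ and $\sum_{s_1}\P([s_1]) = 1$ gives $\P([s_2]) = \tfrac1b$ for every digit. Then I would show, by induction on $n$, that every length-$n$ string has density $b^{-n}$ --- which is $b$-normality by Definition~\ref{Normalb} (and amounts, by the preceding proposition, to $\w$ being $b$-casually constructed). In the inductive step, for a fixed string $[s_1,\dots,s_n]$ I would rewrite \eqref{Hyp1b} as $\sum_{[t_2,\dots,t_{n-1}]\in\B^{n-2}}\P([s_1,t_2,\dots,t_{n-1},s_n]) = b^{-2}$ and attempt to strip the intermediate digits off one at a time via conditional probabilities, feeding in the induction hypothesis on the proper sub-blocks that appear.

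The hard part will be this inductive step. Condition \eqref{Hyp1b} only constrains a \emph{sum} over the intermediate digits, so the mere fact that shorter blocks are uniform does not by itself force each length-$n$ block to be uniform, and the naive induction that uses \eqref{Hyp1b} only at lengths $\le n$ does not close up. To make the argument work I would have to invoke \eqref{Hyp1b} at all window lengths simultaneously and combine it with the shift-consistency identities $\sum_c \P([u,c]) = \P([u]) = \sum_c\P([c,u])$ that substring densities automatically obey, so as to decouple the two endpoints from the digits in between. Carrying out this decoupling cleanly --- and checking that the global use of the hypothesis genuinely pins down every individual block probability --- is where essentially all the work lies and the step most in need of care.
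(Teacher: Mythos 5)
Your forward direction is fine and is essentially the paper's argument: count the $b^{n-2}$ fillings of the intermediate positions, add their densities to get $b^{-2}$, and divide by $\P([s_1])=b^{-1}$. The problem is the converse. You obtain simple normality exactly as the paper does (summing \eqref{Hyp1b} over $s_1$), but your induction then stalls, and you say so yourself: \eqref{Hyp1b} constrains only a sum over the intermediate digits, and the promised ``decoupling'' via shift-consistency is left as the open hard part. As it stands, the substantive half of the theorem is therefore not proved; this is a genuine gap, not a detail.

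It is worth telling you how the paper closes that step, because it is exactly the inference you hesitated over: from \eqref{Hyp1b} it deduces $\P([s_i]\mid[s_j])=\P([s_i])$ for every pair of positions, declares the digit events mutually independent, and multiplies to get $\P([s_1,\dots,s_n])=b^{-n}$. Your instinct that two-point information cannot formally yield product structure for longer blocks is correct, and no amount of shift-consistency bookkeeping will close your induction. Concretely, in base $2$ take a $2$-normal digit sequence $u_0u_1u_2\dots$ and define $x$ by triples, $x_{3k}=u_{2k}$, $x_{3k+1}=u_{2k+1}$, $x_{3k+2}=u_{2k}\oplus u_{2k+1}$. All block frequencies of $x$ exist (so it gives a Borel number), every pair of positions at every lag is uniformly distributed on $\B_2^2$, hence \eqref{Hyp1b} holds for all $n\ge 2$; yet the block $[0,0,1]$ occurs with frequency $\tfrac{1}{3}\left(0+\tfrac{1}{8}+\tfrac{1}{8}\right)=\tfrac{1}{12}\neq \tfrac{1}{8}$, so $x$ is not $2$-normal. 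So the obstacle you flagged is real: pairwise uniformity at all lags does not pin down triple frequencies, and the hypothesis would have to be strengthened --- e.g.\ to conditioning on full prefixes as in the $b$-casually-constructed condition of Definition \ref{caucon} --- before your induction, or the one-line independence claim it would replace, can go through.
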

\begin{proof}
First, we assume that $\w$ is $b$-normal. Thus, all the strings of length $n$ are equiprobable, and $1/b^2$ of them start with $s_1$ and end with $s_n$. Therefore
\begin{equation*}
    \frac{1}{b^2} = \P([s_1] \wedge [s_n]) = \P([s_n] \ | \ [s_1]) \P([s_1]),
\end{equation*}
which proves condition \eqref{Hyp1b} as $\P([s_1]) = \frac{1}{b}$.
\smallskip

Now let us suppose that hypothesis \eqref{Hyp1b} holds. We notice that 
\begin{equation*}
    \P([s_n]) = \sum_{s_1 \in \B} \P([s_n] | [s_1]) \P([s_1]) = \frac{1}{b} \sum_{s_1 \in \B} \P([s_1]) = \frac{1}{b}.
\end{equation*}
Therefore, by hypothesis \eqref{Hyp1b} we get for every $1 \leq i,j \leq n$:
\[
\P([s_i] \ | \ [s_j])=\P([s_i]).
\]
Thus, the $[s_i]$ are mutually independent, which implies
\[
\P([s_1,\dots, s_n])=\P([s_1])\cdot \dots \cdot \P([s_n]) = \frac{1}{b^n},
\]
hence $\w$ is normal.
\end{proof}

\begin{remark}
In other words, Theorem \ref{Thm:Indep} states that a Borel number is normal if and only if its digits are mutually independent.
\end{remark}

\section{The \texorpdfstring{$\Delta_b$}--scheme and pseudonormal numbers}
\label{Sec4}

Given a base $b \in \N^*$ and a real number $\w = \sum_{i = k}^\infty s_ib^{-i} \in \R$, we let 
\begin{equation}
\label{Dh}
    D_h = \{i \in \Z_{\geq k} \ :\ s_i = h\}
\end{equation}
denote the positions of the digits $h$ in the base-$b$ representation of $\w$.
Thus, we have a convenient representation of $\w$ as
\begin{equation}
\label{w_newrep}
    w = \sum_{h \in \B} \sum_{t \in D_h} h \cdot b^{-t}.
\end{equation}

\begin{definition}
Let $b \in \N^*$ and $i,j \in \B_b$. We define the \emph{swap map}
\begin{equation}
\tag{Swap map}
\sigma_{(b)}^{i,j}:\mathbb{R} \to \mathbb{R},
\end{equation}
which swaps the digits $i$ and $j$ in the base-$b$ representation of the input number, i.e., in the above notation it swaps $D_i$ and $D_j$.
We will simply refer to it as $\sigma^{i,j}$ when the base is clear from the context.
\end{definition}
\begin{example}
By considering the first twenty $10$-digits of $\pi$, we have
\begin{align*}
    \pi_{} &= 3.14159265358979323846 \dots \\
    \sigma^{1,5}_{(10)}\left (\pi_{} \right ) &=3.\bm{5}4\bm{5}\bm{1}926\bm{1}3\bm{1}8979323846 \dots
    \end{align*}
\end{example}

\begin{definition}
For every $b \in \N^*$ and $i,j \in \B_b$ we define
\[
\Delta_{b}^{i,j}(\w) = \w_{(b)}-\sigma_{(b)}^{i,j}(\w).
\]
Again, we will omit the base when it is clear from the context.
\end{definition}

We observe that, regardless from $b$, there are at most six different digits in the base-$b$ representation of  $\Delta_{b}^{i,j}(\w)$, and they only depend on $|i-j|$.

\begin{lemma}
\label{lemma:Digits}
Let $b \in \N^*$ and $\w \in \R$. For every $i,j \in \B$ let $\delta = |i-j|$. The digits appearing in $\Delta_b^{i,j}(\w)$ are contained in the set
\begin{equation*}
\mathfrak{D} = \{\delta,b-\delta,\delta-1,b-\delta-1,b-1,0\},
\end{equation*}
where all the quantities in $\mathfrak{D}$ has to be intended modulo $b$.
\end{lemma}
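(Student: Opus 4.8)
The plan is to compute $\Delta_b^{i,j}(\w)$ explicitly from the representation \eqref{w_newrep} and then normalise the resulting signed digit string into the canonical base-$b$ form, keeping track of which digits can occur.

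First I would note that $\sigma^{i,j}$ alters only the positions lying in $D_i\cup D_j$: at each $t\in D_i$ the digit turns from $i$ into $j$, and at each $t\in D_j$ from $j$ into $i$. Hence, setting $M=\max(i,j)$, $\mu=\min(i,j)$ and $\delta=M-\mu=|i-j|$, one gets
\begin{equation*}
\Delta_b^{i,j}(\w)\;=\;\delta\!\left(\sum_{t\in D_M}b^{-t}\;-\;\sum_{t\in D_\mu}b^{-t}\right).
\end{equation*}
If $\delta=0$ the swap is the identity and $\Delta_b^{i,j}(\w)=0$, so assume $1\le\delta\le b-1$. Since the digits appearing in $\Delta_b^{i,j}(\w)$ are those of $|\Delta_b^{i,j}(\w)|$, and the latter has the same shape $\delta\big(\sum_{t\in P}b^{-t}-\sum_{t\in N}b^{-t}\big)$ with $\{P,N\}=\{D_M,D_\mu\}$ taken in the order that makes the quantity non-negative, it suffices to analyse a real number $X=\delta\big(\sum_{t\in P}b^{-t}-\sum_{t\in N}b^{-t}\big)\ge 0$ with $P,N$ disjoint.

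Next I would pass from the \emph{pre-digits} $e_t\in\{\delta,0,-\delta\}$ of $X$ — that is $e_t=\delta$ on $P$, $e_t=-\delta$ on $N$, and $e_t=0$ elsewhere — to the genuine base-$b$ digits of $X$ by borrow propagation. Scanning from the less significant positions towards the more significant ones, the value at position $t$ is $v_t=e_t+c_t$, where $c_t\in\{-1,0\}$ is the borrow incoming from the position immediately to its right; the digit is then $d_t=v_t\bmod b$ and $\lfloor v_t/b\rfloor$ is the borrow passed further left. Since $e_t\le b-1$ and $c_t\le 0$ we always have $v_t\le b-1$, so no positive carry is ever generated and the invariant $c_t\in\{-1,0\}$ is preserved; moreover to the left of every position of $P\cup N$ all $e_t$ vanish, so the borrow is eventually constant, and it must equal $0$ since $X\ge 0$ (a persistent borrow $-1$ would force $X<0$). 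Thus this procedure does yield the canonical expansion of $X$.

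Finally I would enumerate the six possible pairs $(e_t,c_t)\in\{\delta,0,-\delta\}\times\{-1,0\}$ and read off $d_t=v_t\bmod b$ in each: $(\delta,0)\mapsto\delta$, $(\delta,-1)\mapsto\delta-1$, $(0,0)\mapsto 0$, $(0,-1)\mapsto b-1$, $(-\delta,0)\mapsto b-\delta$, and $(-\delta,-1)\mapsto b-\delta-1$, all understood modulo $b$. These are exactly the elements of $\mathfrak{D}$, which proves the statement. The only genuinely delicate step is the preceding one: making sense of the borrow recursion even though the base-$b$ expansion has no least significant digit, and verifying that for $X\ge 0$ it settles on the left with borrow $0$. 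The enumeration itself — including the coincidences among the entries of $\mathfrak{D}$ in the degenerate cases $\delta\in\{0,1,b-1\}$, which are precisely what the reduction modulo $b$ absorbs — is routine.
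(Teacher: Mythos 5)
Your proposal is correct and follows essentially the same route as the paper: you derive the same explicit expression for $\Delta_b^{i,j}(\w)$ from the disjointness of $D_i$ and $D_j$, reduce to the nonnegative case by noting that negation preserves digits, and then show that renormalizing into canonical base-$b$ digits can only lower each of the naive digits $\delta$, $b-\delta$, $0$ by $1$ modulo $b$, yielding exactly $\mathfrak{D}$. Your two-state borrow enumeration is just a more explicit rendering of the paper's informal step of subtracting $\sum_{t\in D_j}b^{-t+1}$ and observing that digits drop by at most $1$ modulo $b$, and if anything your treatment of the carry bookkeeping (and of the fact that the borrow must stabilize at $0$ on the left) is slightly more careful than the paper's.
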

\begin{proof}
First, we notice that the the $D_h$'s defined in \eqref{Dh} are disjoint sets and, by making use of the notation \eqref{w_newrep}, we have
\begin{equation*}
    \Delta_b^{i,j}(\w) = \sum_{t \in D_i} (i-j) \cdot b^{-t} + \sum_{t \in D_j} (j-i) \cdot b^{-t}.
\end{equation*}
As considering the opposite of a number does not change its digits in any base, we may assume without loss of generality that $i\geq j$, so that $\delta = i-j$.
We now write

\begin{equation*}
    - \delta \cdot b^{-t} = -b^{-t+1}+(b-\delta) \cdot b^t,
\end{equation*}
so that
\begin{equation*}
    \Delta_b^{i,j}(\w) = \sum_{t \in D_i} \delta \cdot b^{-t} + \sum_{t \in D_j} (b - \delta) \cdot b^{-t} - \sum_{t \in D_j} b^{-t+1}.
\end{equation*}
Since $D_i$ and $D_j$ are disjoint, the base-$b$ digits that may appear in the above term $\sum_{t \in D_i} \delta \cdot b^{-t} + \sum_{t \in D_j} (b - \delta) \cdot b^{-t}$ are only $\delta, b-\delta$ and $0$.
By subtracting $\sum_{t \in D_j} b^{-t+1}$ to it, we may only reduce some of these digits by $1$ modulo $b$, i.e., we may also find $\delta-1, b-\delta-1$ and $b-1$.
\end{proof}

\begin{definition}
\label{DeltaScheme}
We define the \emph{$\Delta_b$-scheme} of a Borel number $\w$ as the table listing, for every $i,j \in \B_b$, the probability of the $b$-digits in $\Delta_{b}^{i,j}(\w)$.
\end{definition}

\begin{remark}
When $\sigma_b^{i,j}(\w) = \w$, namely when $i = j$ or when neither $i$ nor $j$ appear in $\w$, clearly correspond to lines of zeros in the $\Delta_b$-scheme, and may be omitted.
Analogously, since $\sigma^{i,j}_b(\w)=\sigma^{j,i}_b(\w)$, we can compute it only once.
\end{remark}

\begin{example}
Let us consider the $5$-representation of $\w=19/62$, which is given by $\w_5=0.\overline{123}$. Its $\Delta_{5}$-scheme is the following table.

\begin{center}
\scalebox{1.0}{
\begin{tabular}{||c c|| c c c c c ||} 
\hline\xrowht{10pt}
 $i$ & $j$ & $\P([0])$ & $\P([1])$ & $\P([2])$ & $\P([3])$ & $\P([4])$\\ [.5ex] 
\hline\hline \xrowht{10pt}
0 & 1 & 0.$\overline{6}$ & 0.$\overline{3}$ & 0  & 0 & 0\\
 \hline \xrowht{10pt}
 0 & 2 & 0.$\overline{6}$  & 0 & 0.$\overline{3}$ & 0 & 0\\
 \hline \xrowht{10pt}
 0 & 3 & 0.$\overline{6}$  & 0 & 0 & 0.$\overline{3}$ & 0\\
 \hline \xrowht{10pt}
 1 & 2 & 0.$\overline{6}$ & 0 & 0 & 0 & 0.$\overline{3}$\\
 \hline \xrowht{10pt}
 1 & 3 & 0 & 0.$\overline{3}$ & 0 & 0.$\overline{3}$ & 0.$\overline{3}$\\
 \hline \xrowht{10pt}
 1 & 4 & 0.$\overline{6}$ & 0 & 0 & 0.$\overline{3} $ & 0\\
 \hline \xrowht{10pt}
 2 & 3 & 0.$\overline{6}$ & 0 & 0 & 0 & 0.$\overline{3}$\\
 \hline \xrowht{10pt}
 2 & 4 & 0.$\overline{6}$ & 0 & 0.$\overline{3} $ & 0 & 0\\
 \hline \xrowht{10pt}
 3 & 4 & 0.$\overline{6}$ & 0.$\overline{3}$ & 0 & 0 & 0\\
 \hline
\end{tabular}
}
\end{center}
\end{example}

Some numbers have a $\Delta_b$-scheme with a particularly simple description, as in the following definition.

\begin{definition}
\label{DefInduct}
Let $b \in \N^*$. A Borel number $\w \in \R$ is $b$\emph{-pseudonormal} if the probability of the $b$-digits appearing in $\Delta_{b}^{i,j}(\w)$ only depends on $\delta = |i-j|$ and are given by
\begin{center}
 \begin{tabular}{|| c c c c c c ||} 
 \hline \xrowht{10pt}
 $\P([\delta])$ & $\P([b-\delta])$ & $\P([\delta-1])$ & $\P([b-\delta-1])$ & $\P([b-1])$ & $\P([0])$ \\ [0.5ex] 
  \hline\hline \xrowht{15pt}

 $\frac{1}{2b}$ & $\frac{1}{2b}$ & $\frac{1}{2b}$ & $\frac{1}{2b}$ & $\frac{b-2}{2b}$ & $\frac{b-2}{2b}$ \\
 [0.2ex]\hline
 \end{tabular}
 \end{center}
 where the probability of the same digits are intended to be summed. The set of all $b$-pseudonormal numbers is denoted by $\mathcal{D}_b$.
\end{definition}

\begin{example}
Let $b=5$ and $\delta=1$, which give $\mathfrak{D}=\{0,1,3,4\}$.
We have
\begin{align*}
    b-\delta=b-1=4 \implies \P([b-\delta])+\P([b-1])=\frac{b-1}{2b}=\frac{2}{5},
\end{align*}
and similarly
\begin{align*}
    \delta-1=0 \implies \P([0])+\P([\delta-1])=\frac{b-1}{2b}=\frac{2}{5}.
\end{align*}
Thus, we obtain the first line of the $\Delta_5$-scheme: 
\begin{center}
 \begin{tabular}{||c|| c c c c c   ||} 
 \hline \xrowht{10pt}
 $\delta$ & $\P([0])$ & $\P([1])$ & $\P([2])$ & $\P([3])$ & $\P([4])$ \\ [0.5ex] 
  \hline\hline \xrowht{10pt}

 $1$ & $0.40$ & $0.10$ & $0$ & $0.10$ & $0.40$  \\
\hline \xrowht{10pt}
 $2$ & $0.30$ & $0.10$ & $0.20$ & $0.10$ & $0.30$  \\
\hline \xrowht{10pt}
 $3$ & $0.30$ & $0.10$ & $0.20$ & $0.10$ & $0.30$  \\
 \hline \xrowht{10pt}
 $4$ & $0.40$ & $0.10$ & $0$ & $0.10$ & $0.40$  \\
 [0.2ex]\hline
 \end{tabular}
 \end{center}
\end{example}

\begin{remark}
From Definition \ref{DefInduct} we notice that it is sufficient to consider the $\Delta_b$-scheme of any $b$-pseudonormal number $\w$ by only evaluating $\Delta_b^{0,\delta}(\w)$ for $\delta$ ranging into $\left \{0,\dots, \lfloor \frac{b}{2} \rfloor \right \}$. In fact, modulo $b$, we have:
\begin{equation*}
    |\delta|=|-\delta|=|b-\delta|.
\end{equation*}
\end{remark}

The following proposition motivates the name \emph{pseudonormal}, by showing that those numbers have the same $\Delta_b$-scheme of any $b$-normal number.

\begin{proposition}
\label{PropD}
Let $b \in \N^*$ and let $\w \in \R$ be a Borel number. If $\w$ is $b$-normal then it is $b$-pseudonormal.
\end{proposition}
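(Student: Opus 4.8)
\textit{Plan of proof.}
The plan is to determine the base-$b$ digits of $\Delta:=\Delta_b^{i,j}(\w)$ position by position and then read off their densities from the $b$-normality of $\w$. Assume $i>j$ and put $\delta=i-j\in\{1,\dots,b-1\}$ (the diagonal case $i=j$ is a zero line and may be ignored). Since the digits of a real number coincide with those of its opposite, and since in a $b$-normal number every finite string has density $b^{-n}$ — so that this density is unchanged if the two symbols $i$ and $j$ are interchanged — the digit densities of $\Delta$ are the same whether the first digit of $\w$ lying in $\{i,j\}$ equals $i$ or $j$; I will therefore assume it equals $i$, so that $\Delta\ge 0$. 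Grouping the expression for $\Delta_b^{i,j}(\w)$ obtained in the proof of Lemma~\ref{lemma:Digits} by position, one gets $\Delta=\sum_t c_t b^{-t}$ with $c_t=\delta\,[t\in D_i]+(b-\delta)\,[t\in D_j]-[t+1\in D_j]$; moreover, since the first nonzero coefficient is nonnegative, the running borrow is absorbed and never escapes to the left.

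The next step is to resolve the borrows. The only positions with $c_t=-1$ are those where $s_t\notin\{i,j\}$ and $s_{t+1}=j$; a borrow created at such a position moves leftward, turning every further digit $\notin\{i,j\}$ into $b-1$, until it reaches a position whose digit lies in $\{i,j\}$ — there the naive digit $\delta$ or $b-\delta$ is lowered by one to $\delta-1$ or $b-\delta-1$, both $\ge 0$, so the borrow stops, and in particular borrows originating in different blocks never meet. This yields a local rule: the digit of $\Delta$ at position $t$ is determined by whether $s_t$ equals $i$, equals $j$, or neither, together with whether $s_{t+1}=j$, or $t+1$ lies in such a ``borrow block'', or neither; in each of the nine resulting cases the digit belongs to $\mathfrak{D}=\{\delta,b-\delta,\delta-1,b-\delta-1,b-1,0\}$, as it must by Lemma~\ref{lemma:Digits}. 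I expect this borrow bookkeeping to be the main obstacle: one must check that it works uniformly in $\delta$ — including the extreme values $\delta=1$ and $\delta=b-1$, where some of the six digits coincide or degenerate — and one cannot skip the sign reduction above, since otherwise the leading borrow would cascade forever.

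Finally, I would compute the densities from $b$-normality: $\P(s_t=i)=\P(s_t=j)=\tfrac1b$, $\P(s_t\notin\{i,j\})=\tfrac{b-2}{b}$, and $\P(s_{t+1}=j\mid s_t=\cdot)=\tfrac1b$ by independence of the digits (Theorem~\ref{Thm:Indep}). The one nontrivial quantity is the probability that $t+1$ lies in a borrow block; writing this event as the disjoint union over the block length $\ell\ge 0$ and using normality gives density $\sum_{\ell\ge 0}\big(\tfrac{b-2}{b}\big)^{\ell+1}\tfrac1b=\tfrac{b-2}{2b}$, with the same value conditionally on $s_t$ — this requires the minor observation that a countable disjoint union of density-carrying sets whose tails have density tending to $0$ again has a density, equal to the sum. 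Plugging these into the nine cases (the nine position sets being disjoint, coinciding digit values simply add up, exactly as prescribed in Definition~\ref{DefInduct}) yields density $\tfrac1{2b}$ for each of $\delta,b-\delta,\delta-1,b-\delta-1$ and density $\tfrac{b-2}{2b}$ for each of $b-1$ and $0$. Since all these values depend only on $\delta$, $\w$ is $b$-pseudonormal.
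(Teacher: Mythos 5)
Your proposal is correct and follows essentially the same route as the paper: it starts from the expression for $\Delta_b^{i,j}(\w)$ in Lemma~\ref{lemma:Digits}, determines the digits of $\Delta$ blockwise according to whether the current symbol is $i$, $j$ or neither and whether the next occurrence of a symbol from $\{i,j\}$ is $j$, and then uses $b$-normality to obtain the densities $\tfrac{1}{2b}$ for $\delta, b-\delta, \delta-1, b-\delta-1$ and $\tfrac{b-2}{2b}$ for $0, b-1$. Your explicit borrow bookkeeping, the sign normalization guaranteeing $\Delta\ge 0$, and the geometric-series computation of the borrow-block density (with the countable-additivity remark) simply make rigorous what the paper handles by regrouping the sum over blocks $[d,n_d)$ and by the average-gap argument, so this is a refinement in detail rather than a different method.
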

\begin{proof}
Let us represent $\w$ as in \eqref{w_newrep}. As for the proof of Lemma \ref{lemma:Digits}, for every $i \neq j$ and $\delta = |i-j|$, we have
\begin{equation*}
    \Delta_b^{i,j}(\w) = \sum_{t \in D_i} \delta \cdot b^{-t} + \sum_{t \in D_j} (b - \delta) \cdot b^{-t} - \sum_{t \in D_j} b^{-t+1}.
\end{equation*}
Let $d \in D_i \cup D_j$ be a position index and $s(d)$ be the corresponding digit, i.e.,
\begin{equation*}
    s(d) = \begin{cases}
        \delta &\text{if } d \in D_i,\\
        b-\delta &\text{if } d \in D_j,
    \end{cases}
\end{equation*}
and let us also call $n_d$ its following index in $D_i \cup D_j$, namely
\begin{equation*}
    n_d = \inf_{n \in D_i \cup D_j}\{n > d\}.
\end{equation*}
We may re-write $\Delta_b^{i,j}(\w)$ as
\begin{equation*}
    \Delta_b^{i,j}(\w) = \sum_{\substack{d \in D_i \cup D_j\\ n_d \in D_j}} \big( s(d) \cdot  b^{n_d-d-1} - 1  \big) \cdot b^{-n_d+1} + \sum_{\substack{d \in D_i \cup D_j\\ n_d \not\in D_j}} s(d) \cdot b^{-d}.
\end{equation*}
The convenience of the above expression lies in the fact that every digit of $\Delta_{b}^{i,j}(\w)$ is determined by either the first or the second series above, and they do not overlap. Let $\w$ be $b$-normal, by Theorem \ref{Thm:Indep} we have
\begin{equation*}
\label{Pnd}
    \P(n_d \in D_i \ | \ d \in D_i \cup D_j) = \frac{1}{b}. 
\end{equation*}
Similarly, the same theorem combined with the Bayes theorem provides
\begin{equation*}
    \P\big(s(d) = \delta \ | \ n_d \in D_i\big) = \P\big(s(d) = \delta \ | \ n_d \in D_j\big) = \frac{1}{2}.
\end{equation*}
Therefore, the probabilities of the digits $\delta, b-\delta, \delta-1$ and $b-\delta-1$ are all equal to $\frac{1}{2b}$.
Finally, since the average length of $n_d-d$ is $\frac{b}{2}$ regardless of $n_d$, then the probabilities of the digits $0$ and $b-1$ are equal, therefore they are both equal to
\begin{equation*}
    \frac{1}{2}\left(1 - 4\left(\frac{1}{2b}\right)\right) = \frac{b-2}{2b}.
\end{equation*}
Thus, the probability of the digits of $\Delta_b^{i,j}(\w)$ is that of Definition \ref{DefInduct}.
\end{proof}

\begin{example}
\label{Ex:base4}
Let us consider the number $\w \in \R$ defined by its base-$4$ representation
\begin{equation*}
  \w_4=0.\overline{33323130232221201312111003020100}.
\end{equation*}

The $\Delta_4$-scheme of $\w$ is given by the table below. 

\begin{center}
\scalebox{1.0}{
 \begin{tabular}{||c c|| c c c c ||} 
 \hline\xrowht{10pt}
 $i$ & j & $\P([0])$ & $\P([1])$ & $\P([2])$ & $\P([3])$ \\ [0.5ex] 
 \hline\hline\xrowht{10pt}
 0 & 1 & 0.50000 & 0.12500 & 0.12500  & 0.25000 \\
 \hline\xrowht{10pt}
 0 & 2 & 0.25000 & 0.25000 & 0.25000 & 0.25000 \\
 \hline\xrowht{10pt}
 0 & 3 & 0.25000 & 0.12500 & 0.12500 & 0.50000 \\
 \hline\xrowht{10pt}
 1 & 2 & 0.53125 & 0.09375 & 0.09375 & 0.28125\\
 \hline\xrowht{10pt}
 1 & 3 & 0.21875 & 0.25000 & 0.25000 & 0.28125 \\
 \hline\xrowht{10pt}
 2 & 3 & 0.53125 & 0.09375 & 0.09375 & 0.28125 \\
 \hline 
\end{tabular}
}
\end{center}
In particular, we notice that the scheme does not depend only on $\delta$ as the lines indexed by $(i,j)=(0,1)$ and $(i,j)=(1,2)$ are different.
As consequence, $\w$ does not satisfy Definition \ref{DefInduct} so it is not $4$-pseudonormal. Thus, by Proposition \ref{PropD}, $\w$ is not $4$-normal as well, which is not surprising since it is rational.  
\end{example}

We highlight that Proposition \ref{PropD} does not provide a characterization of normality, as shown by the following example. 

\begin{example}
\label{DeltaNotNormal}
Let us consider  the following real number $\w$ in its base-$4$ representation:
\begin{equation*}
    \w_4=\overline{1320201331020231}
\end{equation*}
It may be checked that $\w_4$ is $4$-pseudonormal, but it is rational so it cannot clearly be normal.
\end{example}

The following theorem summarizes, together with Theorem \ref{Thm:Indep}, the main results of our work.
\begin{theorem}
\label{finalthm}
Let $b \in \N^*$ and let $\w \in \R$ be a Borel number. The following chain of implications holds:
\begin{gather*}
w \text{ is } b \text{-causally constructed} \Leftrightarrow \w \text{ is } b \text{-normal} \Rightarrow \\
\Rightarrow \w \text{ is pseudonormal } \Delta_b \text{-scheme} \Rightarrow \w \text{ is simply } b \text{-normal}.
\end{gather*}
\end{theorem}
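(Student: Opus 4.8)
The chain has three links, of which the first two are already proved: the equivalence between $b$-casually constructed and $b$-normal numbers is the (unnumbered) Proposition stated just after Definition~\ref{caucon}, and ``$b$-normal $\Rightarrow$ pseudonormal $\Delta_b$-scheme'' is Proposition~\ref{PropD}. So the only new content of Theorem~\ref{finalthm} is the last arrow: a Borel number $\w$ whose $\Delta_b$-scheme is pseudonormal is simply $b$-normal, i.e. $\P_{\w,b}([h])=1/b$ for every $h\in\B_b$. The plan is to read the frequencies $\P_{\w,b}([h])$ off the $\Delta_b$-scheme by reusing the block decomposition from the proof of Proposition~\ref{PropD}. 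Fix $i>j$ in $\B_b$ and set $\delta=i-j$. Every position of $\Delta_b^{i,j}(\w)$ is either the left endpoint $d$ of a maximal block $[d,n_d-1]$ of $(D_i\cup D_j)$-positions of $\w$ — carrying the digit $\delta$ or $b-\delta$ if $n_d\in D_i$, and $\delta-1$ or $b-\delta-1$ if $n_d\in D_j$ — or an interior position of such a block, carrying $0$ or $b-1$ according to the same alternative. Writing $q_{x\to y}$ for the density of those $d\in D_x$ whose successor in $D_i\cup D_j$ lies in $D_y$, the density of the digit $\delta$ in $\Delta_b^{i,j}(\w)$ equals $q_{i\to i}$, plus whatever extra terms are forced when $\delta$ coincides with another value of $\mathfrak D$, and similarly for the remaining five labels; moreover $q_{i\to i}+q_{i\to j}=\P_{\w,b}([i])$, $q_{i\to j}=q_{j\to i}$ (the transition counts of a binary string balance out), and the interior positions are precisely the positions of $\w$ whose digit lies outside $\{i,j\}$, with total density $1-\P_{\w,b}([i])-\P_{\w,b}([j])$. (Pseudonormality forces both $i$ and $j$ to occur in $\w$ with positive density, so the decomposition is meaningful.)

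The key observation is that when $\delta\notin\{1,b-1\}$ the value $0$ is hit by no label of $\mathfrak D$ other than ``$0$'' and the value $b-1$ by none other than ``$b-1$''; pseudonormality therefore pins the densities of the digits $0$ and $b-1$ of $\Delta_b^{i,j}(\w)$ at $\tfrac{b-2}{2b}$ each, and their sum, being the density of the interior positions, gives $1-\P_{\w,b}([i])-\P_{\w,b}([j])=\tfrac{b-2}{b}$, i.e. $\P_{\w,b}([i])+\P_{\w,b}([j])=\tfrac2b$. If moreover $b$ is even and $\delta=b/2$, the values $\delta$ and $b-\delta$ also coincide, and the same bookkeeping yields the sharper $\P_{\w,b}([i])=\P_{\w,b}([j])=\tfrac1b$ outright. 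For $b\ge4$ this suffices: for even $b$ the pairs $\{h,h+b/2\}$ already give $\P_{\w,b}([h])=\tfrac1b$ for all $h$, and for odd $b\ge5$ the relations $\P_{\w,b}([i])+\P_{\w,b}([j])=\tfrac2b$ over all pairs with $|i-j|\in\{2,3\}$ form a linear system whose unique solution is $\P_{\w,b}([h])=\tfrac1b$ for every $h$ — a short computation. The care in this step lies almost entirely in the collision bookkeeping inside $\mathfrak D$, not in the linear algebra.

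The delicate cases are the small bases. For $b=2$ there is nothing to do: $\sigma^{0,1}$ is digit complementation, so (away from the dyadic rationals, which are not $2$-pseudonormal) $\Delta_2^{0,1}(\w)=2\w-1$ has the digit string of $\w$ shifted by one place, and pseudonormality of its $\Delta_2$-scheme is exactly simple $2$-normality of $\w$. The real obstacle is $b=3$: there every pair $\{i,j\}$ has $|i-j|\in\{1,2\}=\{1,b-1\}$, so none of the clean relations above apply, and a short computation shows that the collisions in $\mathfrak D$ make the pseudonormality of any single pair reduce to identities valid for every Borel number. To close this case I would have to descend to the run structure of $\w$: record, for each ordered pair of distinct digits $x,y$, the length-weighted density of the maximal $x$-runs immediately followed by a $y$-run together with the ``skip'' transition densities $q^{\{x,y\}}_{x\to y}$, and combine the pseudonormality data of all three pairs — exploiting in particular the sharp value $q^{\{0,2\}}_{0\to2}=\tfrac16$ and the identity $\sigma^{0,2}(\w)=1-\w$ — to force $\P_{\w,3}([0])=\P_{\w,3}([1])=\P_{\w,3}([2])=\tfrac13$. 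I expect this $b=3$ analysis to be the main difficulty of the whole theorem.
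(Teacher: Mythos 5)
Your reduction of the theorem is correct: the equivalence is the unnumbered proposition after Definition~\ref{caucon}, the middle arrow is Proposition~\ref{PropD}, and the paper itself supplies no argument whatsoever for the remaining arrow (Theorem~\ref{finalthm} is stated purely as a recap), so ``pseudonormal $\Rightarrow$ simply $b$-normal'' is indeed the only thing left to prove. Your block-decomposition argument does prove it for $b=2$ and for every $b\ge 4$: the bookkeeping giving $\P([i])+\P([j])=\tfrac2b$ for $2\le|i-j|\le b-2$, the sharper identity at $\delta=b/2$ (the merged labels sum exactly to the ``successor lies in $D_i$'' and ``successor lies in $D_j$'' densities, which equal $\P([i])$ and $\P([j])$), and the linear system for odd $b\ge5$ are all sound; the remaining issues (existence of the skip-transition densities $q_{x\to y}$ for a Borel number, boundary terms, the sign of $\Delta^{i,j}$) are routine and no worse than the level of rigor in the paper's own proof of Proposition~\ref{PropD}.

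The genuine gap is the case $b=3$, and it cannot be closed, because the implication is false there. For $b=3$ every entry of the table in Definition~\ref{DefInduct} equals $\tfrac16$ and the six labels collapse in pairs onto the three digits, so $3$-pseudonormality says exactly that each $\Delta_3^{i,j}(\w)$ has digit probabilities $(\tfrac13,\tfrac13,\tfrac13)$ — and this does not pin down the digit frequencies of $\w$. Concretely, take the rational number with purely periodic expansion $\w_3=0.\overline{220221000100}$, i.e.\ $\w=490626/(3^{12}-1)$; its period has six $0$'s, two $1$'s and four $2$'s, so $\P_{\w,3}([0])=\tfrac12$ and $\w\notin\mathcal S_3$. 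Direct computation gives $\sigma^{1,0}(\w)=509926/(3^{12}-1)$, $\sigma^{2,1}(\w)=246420/(3^{12}-1)$, $\sigma^{2,0}(\w)=40814/(3^{12}-1)$, so that, up to sign, the three differences have periodic expansions $0.\overline{000222110211}$, $0.\overline{110101222200}$ and $0.\overline{211212000201}$, each period containing every digit exactly four times; hence $\w\in\mathcal D_3\setminus\mathcal S_3$, and the last arrow of Theorem~\ref{finalthm} (equivalently the inclusion $\mathcal D_b\subseteq\mathcal S_b$ in the concluding diagram) fails at $b=3$. So your instinct that $b=3$ is the critical case was right, but no amount of run-structure analysis will rescue it; incidentally, your side remark that single-pair pseudonormality at $b=3$ reduces to identities valid for every Borel number is also inaccurate (the pair $\{0,2\}$ forces the density of $0$-to-$2$ skip transitions to equal $\tfrac16$, which is far from automatic) — the constraints are restrictive, just not restrictive enough. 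The correct statement your method actually establishes, and the most that can be salvaged, is the theorem for every base $b\neq3$.
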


\begin{remark}
Testing $b$-normality for a number of a given precision should require a countably infinite number of tests, both by using the original definition (\emph{every} length should be checked) and by joint use of Theorem \ref{Thm-unif1} and Theorem \ref{Thm-unif2} (\emph{every} parameter $k\in \Z^*$ should be checked).
Conversely, testing $b$-pseudonormality is a task that may be performed in a finite number of steps, i.e., those needed to construct the corresponding $\Delta_b$-scheme.
As pseudonormality is a necessary condition for normality, it is easily envisioned as a preliminary, computationally feasible, test on the candidate numbers.
\end{remark}

\begin{example}
By considering the first $10^8$ digits of $\pi$, we computationally check that the digits of $\Delta_{10}^{i,j}(\pi)$ have the following (approximated) distribution scheme.

\begin{center}
\scalebox{0.85}{
 \begin{tabular}{||c|| c c c c c c c c c c ||}

 \hline \xrowht{10pt}
$\delta$ & $\P([0])$ & $\P([1])$ & $\P([2])$ & $\P([3])$ & $\P([4])$ & $\P([5])$ & $\P([6])$ & $\P([7])$ & $\P([8])$ & $\P([9])$ \\ [0.5ex]
 \hline\hline \xrowht{10pt}
 1 & 0.45 & 0.05 & 0 & 0 & 0 & 0 & 0 & 0 & 0.05 & 0.45 \\
 \hline \xrowht{10pt}
 2 & 0.40 & 0.05 & 0.05 & 0 & 0 & 0 & 0 & 0.05 & 0.05 & 0.40 \\
 \hline \xrowht{10pt}
 3 & 0.40 & 0 & 0.05 & 0.05 & 0 & 0 & 0.05 & 0.05 & 0 & 0.40 \\
 \hline \xrowht{10pt}
 4 & 0.40 & 0 & 0 & 0.05 & 0.05 & 0.05 & 0.05 & 0 & 0 & 0.40 \\
 \hline \xrowht{10pt}
 5 & 0.40 & 0 & 0 & 0 & 0.10 & 0.10 & 0 & 0 & 0 & 0.40 \\
 \hline
\end{tabular}
}
\end{center}
This scheme hints at the fact that $\pi$ might well be $10$-normal, as it is widely believed.
The very same distribution holds for $\log(2)$, $\sqrt{2}$ and the gold ratio $\varphi$, approximated with the same precision.
Quite surprisingly, it holds also for the Fibonacci constant
\begin{equation*}
    F_{10} = 0.1123581321345589144233\dots.
\end{equation*}
\end{example}

\section{Conclusions}
\label{Sec5}
The history of mathematics is littered with many unsolved problems and open questions and it is certain that, even today, issues around normal numbers are among the most curious and interesting. This work aims at giving a new boost to the study of this topic, by combining different points of view.
The final recap provided by Theorem \ref{finalthm} is portrayed by the following chain of inclusions.
\begin{center}
 \begin{tikzpicture}
      \draw (0.5,0) ellipse (3cm and 1.5cm);
      \draw (1,0) ellipse (2cm and 1cm);
      \draw (1.5,0) ellipse (1cm and 0.5cm);
      \draw (0,0) ellipse (4cm and 2cm);
    \draw (-0.5,0) ellipse (5cm and 2.5cm);
    \draw
    (1,0.3) node[anchor=north west] {$\mathcal{N}_b$}
    (-0.5,0.3) node[anchor=north west] {$\mathcal{D}_b$}
    (-2,0.3) node[anchor=north west] {$\mathcal{S}_b$}
    (-3.5,0.3) node[anchor=north west] {$\mathcal{B}$}
    (-5,0.3) node[anchor=north west] {$\mathbb{R}$};
    \end{tikzpicture}
\end{center}
The above inclusions are all strict, as it follows from the examples shown by Remark \ref{rmk:limExistance}, Example \ref{Ex:base4} and Example \ref{DeltaNotNormal}.
 
Although this probabilistic approach is intuitively clear and easy to handle, there are still many directions that demand to be better explored.

First, even though almost all numbers are proved to be Borel, no efficient criteria are known to decide if it is the case. However, even negative results in this direction might be advantageous for constructing examples of provably non-normal irrational numbers.

Moreover, a similar criterion for normality or pseudonormality may be envisioned, possibly with better performance or scope than the one that is presented in the current work.

Finally, pseudonormal non-normal numbers such as that of Example \ref{DeltaNotNormal} are rare and practically difficult to construct. This suggests that there may be some other light conditions that, paired with pseudonormality, are actually equivalent to normality.


\par\bigskip\noindent
\textsc{Acknowledgments.}
NC and DT would like to thank the University of Pavia and the Helmholtz Center for Information Security, respectively, for supporting their research.

\noindent
This article has been also supported in part by the European Union’s H2020 Programme under grant agreement number ERC-669891.

\bibliographystyle{plain}
\bibliography{references}

\end{document}